\newcommand{\F}{\mathbb{F}}
\newcommand{\Q}{\mathbb{Q}}
\newcommand{\Z}{\mathbb{Z}}
\newcommand{\fq}{\mathfrak{q}}
\newcommand{\fP}{\mathfrak{P}}
\newcommand{\calO}{\mathcal{O}}
\newcommand{\fp}{\mathfrak{p}}
\DeclareMathOperator{\Cl}{Cl}
\DeclareMathOperator{\Gal}{Gal}
\DeclareMathOperator{\Norm}{Norm}
\DeclareMathOperator{\ord}{ord}
\DeclareMathOperator{\Res}{Res}
\numberwithin{equation}{section}
\newtheorem{theorem}{Theorem}
\newtheorem*{thm}{Theorem}
\newtheorem{lemma}{Lemma}
\newtheorem{corollary}{Corollary}
\theoremstyle{definition}
\newtheorem{example}[equation]{Example}
\theoremstyle{remark}
\newtheorem{remark}[equation]{Remark}
\definecolor{darkgreen}{rgb}{0,0.5,0}
\DeclareRobustCommand{\SkipTocEntry}[5]{}
\begin{document}

\title[]{
Chevalley's class number formula, unit equations\\ and 
the asymptotic Fermat's Last Theorem}

\author{Nuno Freitas}

\address{Departament de Matem\`atiques i Inform\`atica,
Universitat de Barcelona (UB),
Gran Via de les Corts Catalanes 585,
08007 Barcelona, Spain}

\email{nunobfreitas@gmail.com}

\author{Alain Kraus}
\address{Sorbonne Universit\'e,
Institut de Math\'ematiques de Jussieu - Paris Rive Gauche,
UMR 7586 CNRS - Paris Diderot,
4 Place Jussieu, 75005 Paris, 
France}
\email{alain.kraus@imj-prg.fr}

\author{Samir Siksek}

\address{Mathematics Institute\\
	University of Warwick\\
	CV4 7AL \\
	United Kingdom}

\email{s.siksek@warwick.ac.uk}

\date{\today}
\thanks{Freitas is supported by a Ram\'on y Cajal fellowship with reference RYC-2017-22262.
Siksek is supported by the
EPSRC grant \emph{Moduli of Elliptic curves and Classical Diophantine Problems}
(EP/S031537/1).}
\keywords{Fermat, unit equation, class number, cyclotomic fields.}
\subjclass[2010]{Primary 11D41, Secondary 11R37, 11J86}

\begin{abstract}
 Let~$F$ be a number field and $\calO_F$ its ring of integers. 
We use Chevalley's ambiguous class number formula to give a criterion
for the non-existence of solutions to the unit equation
$\lambda + \mu = 1$, $\lambda, \mu \in \calO_F^\times$. 
This is then used to strengthen a criterion for the asymptotic Fermat's Last
Theorem due to Freitas and Siksek.
\end{abstract}

\maketitle

\section{Introduction}

Let~$F$ be a number field
and consider the unit equation
\begin{equation}\label{E:unit}
 \lambda + \mu = 1, \quad \lambda, \; \mu \in \calO_F^\times.
\end{equation}
This has finitely many solutions by
a famous theorem of
Siegel \cite{Siegel}. Unit equations are the subject 
of extensive research, both from a theoretical
and a computational point-of-view. A beautiful 
result due to Evertse \cite{Evertse}
asserts that if $F$ has signature $(r_1,r_2)$ then \eqref{E:unit}
has at most $3 \times 7^{3 r_1+4r_2}$ solutions. For an extensive
survey, see \cite{EvertseGyory}.

We are in fact interested in sufficient criteria on $F$
that guarantee \eqref{E:unit} has no solutions.
For example, a recent spectacular result 
of Triantafillou \cite{Triantafillou}
asserts if $3$ totally splits in $F$
and $3 \nmid [F:\Q]$ then \eqref{E:unit} has no solutions.
The authors of the current paper have shown \cite{FKS2} 
that if $F$ is a Galois $p$-extension, 
where $p \ge 5$ is prime that totally ramifies
in $F$, then \eqref{E:unit} has no solutions. 

In this paper, we use 
Chevalley's ambiguous class number formula to
give a criterion (Theorem~\ref{T:th1})
for the non-existence of solutions to \eqref{E:unit}.
This is then used to strengthen a criterion for 
the asymptotic Fermat's Last Theorem established
in \cite{FS1}.

\begin{theorem}  \label{T:th1}  
Let $F$ be a number field with class number~$h_F$. Let $p\geq 3$ be a prime number totally ramified in $F$ and set $r=\gcd(2 h_F,p-1)$. 
Suppose the following conditions hold:
 \begin{enumerate}
 \item[(i)] $\gcd\left([F:\Q],\frac{p-1}{2}\right)=1$ and
\smallskip
  \item[(ii)] 
  $\Res\left(X^r-1,(X-1)^r-1\right)\not\equiv 0 \pmod p$, where $\Res$ denotes the resultant.
\end{enumerate}
  Then the unit equation~\eqref{E:unit} has no solutions.
\end{theorem}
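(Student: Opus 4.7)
The strategy is proof by contradiction. Suppose $(\lambda,\mu)$ is a solution to~\eqref{E:unit}. Since $p$ is totally ramified in $F$, there is a unique prime $\fp$ above $p$ with residue field $\calO_F/\fp\cong\F_p$. Reducing modulo $\fp$, set $a\colonequals\lambda\bmod\fp$, so $\mu\bmod\fp=1-a$; both lie in $\F_p^\times$, hence $a\in\F_p\setminus\{0,1\}$. The whole argument reduces to proving that $a^r\equiv 1\pmod p$: granted this, the symmetric argument applied to $\mu$ in place of $\lambda$ gives $(1-a)^r\equiv 1\pmod p$, and since $r$ is even (because $2\mid 2h_F$ and $2\mid p-1$) we have $(1-a)^r=(a-1)^r$. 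Hence $a\bmod p$ is a common root of $X^r-1$ and $(X-1)^r-1$, so their resultant vanishes mod~$p$, contradicting~(ii).

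It therefore suffices to show that the order $s$ of $\lambda\bmod\fp$ in $\F_p^\times$ divides $r=\gcd(2h_F,p-1)$. Since $s\mid p-1$ is automatic, the real task is to establish $s\mid 2h_F$, and this is where Chevalley's ambiguous class number formula enters. A natural plan is to apply the formula to an auxiliary cyclic extension $L/F$ in which $\fp$ is totally ramified and which is genuinely attached to the unit $\lambda$---for instance, a suitable subextension of $F(\zeta_p)/F$ (which is cyclic of order dividing $p-1$), or a Kummer extension built from $\lambda$ and an appropriate root of unity. Chevalley's formula relates $|\Cl(L)^{\Gal(L/F)}|$ to $h_F$, to the product of ramification indices at all ramified (including archimedean) places, and to the unit-norm index $[\calO_F^\times:N_{L/F}(L^\times)\cap\calO_F^\times]$. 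The goal is to combine this formula with hypothesis~(i), $\gcd([F:\Q],(p-1)/2)=1$, to force the divisibility $s\mid 2h_F$; the factor of $2$ in $2h_F$ plausibly comes from the archimedean ramification in $L/F$, or equivalently from the subgroup $\{\pm1\}\subset\calO_F^\times$.

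The hard part will be the Chevalley bookkeeping in this final step: choosing an extension $L$ whose class-field-theoretic data really does reflect the unit $\lambda$, computing its ramification and its unit-norm index, and extracting the divisibility $s\mid 2h_F$ rather than some weaker statement. Condition~(i) is likely invoked precisely to keep the cyclotomic contributions in the formula coprime to $[F:\Q]$, so that no divisor of $(p-1)/2$ is accidentally absorbed into the degree of $F$ and thus lost from the Chevalley side.
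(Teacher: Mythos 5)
Your reduction to the resultant condition is correct and matches the paper's endgame: it does suffice to show $\lambda^r\equiv 1\pmod{\fp}$ for every unit $\lambda$, and the parity of $r$ then turns $(1-a)^r$ into $(a-1)^r$ so that hypothesis (ii) gives the contradiction. But the heart of the theorem --- the divisibility $\lambda^{2h_F}\equiv 1\pmod{\fp}$ --- is only sketched as a plan, and the plan as stated points partly in the wrong direction. The extension to use is not one ``genuinely attached to the unit $\lambda$'' (a Kummer extension built from $\lambda$ would not obviously be cyclic over $F$, nor would $\fp$ be totally ramified in it); it is the single uniform choice $K=F\cdot\Q(\zeta_p)^+$, which handles all units at once. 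Hypothesis (i) forces $[K:F]=(p-1)/2$ and, since $p$ is the only prime ramifying in $\Q(\zeta_p)^+/\Q$, the entire ramification of $K/F$ is concentrated at the unique prime above $\fp$ with $e(K/F)=(p-1)/2=[K:F]$. Chevalley's formula then reads
\[
\#\Cl(K)^G=\frac{h_F}{[\calO_F^\times:\calO_F^\times\cap\Norm(K^\times)]},
\]
and since the left side is a positive integer, the unit-norm index divides $h_F$; this is the step that produces $\lambda^{h_F}\in\calO_F^\times\cap\Norm(K^\times)$, and it is absent from your write-up.

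Your guess about where the factor of $2$ comes from is also off: it is not archimedean ramification (both $F\cdot\Q(\zeta_p)^+$ and $F$ can be taken with no ramification at infinity in the relevant cases), but a residue-field computation. Because $\fP/\fp$ is totally ramified, the inertia group is all of $\Gal(K/F)$, so for a norm $\varepsilon=\Norm(\eta)$ with $\ord_{\fP}(\eta)=0$ one gets $\varepsilon\equiv\eta^{(p-1)/2}\equiv\pm1\pmod{\fP}$ since the residue field is $\F_p$; squaring gives $\lambda^{2h_F}\equiv1\pmod{\fp}$. Without identifying the extension $K$, verifying $e(K/F)=[K:F]$ from hypothesis (i), invoking the integrality of $\#\Cl(K)^G$, and carrying out this norm computation on the residue field, the proposal does not constitute a proof; it defers exactly the ``hard part'' it names.
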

 
The following two corollaries give an illustration of the power of Theorem~\ref{T:th1}.
\begin{corollary} \label{C:cor1}
 Suppose  there is a prime $p\geq  5$  totally ramified in $F$ satisfying:
 \begin{enumerate}
  \item[(i)]  $\gcd\left([F:\Q],\frac{p-1}{2}\right)=1$ and
 \item[(ii)]  $\gcd(2h_F,p-1)=2$. 
  \end{enumerate}
  Then the unit equation~\eqref{E:unit} has no solutions.
 \end{corollary}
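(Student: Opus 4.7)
The plan is to deduce Corollary~\ref{C:cor1} as a direct specialization of Theorem~\ref{T:th1}. Hypothesis (i) of the corollary is identical to hypothesis (i) of the theorem, so I only need to check hypothesis (ii) of the theorem under the stronger assumption $\gcd(2h_F,p-1)=2$. With this stronger assumption the parameter $r$ defined in Theorem~\ref{T:th1} equals $2$, so the job reduces to verifying
\[
\Res\bigl(X^2-1,\,(X-1)^2-1\bigr)\not\equiv 0\pmod{p}.
\]

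To evaluate this resultant I would factor both polynomials explicitly: $X^2-1=(X-1)(X+1)$ has the two roots $\pm 1$, while $(X-1)^2-1=X(X-2)$. Since $X^2-1$ is monic, the resultant equals the product of the values of $X(X-2)$ at the roots $\pm 1$, namely $(1\cdot(1-2))\cdot((-1)\cdot(-1-2))=(-1)(3)=-3$. For $p\geq 5$ this is a nonzero element of $\F_p$, so hypothesis (ii) of Theorem~\ref{T:th1} is satisfied. An application of Theorem~\ref{T:th1} then concludes.

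There is no real obstacle: the corollary is essentially a worked example of Theorem~\ref{T:th1} in the smallest non-trivial case $r=2$, and the only computation required is the elementary resultant above, whose value $-3$ dictates precisely the mild restriction $p\geq 5$ appearing in the statement.
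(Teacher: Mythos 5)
Your proof is correct and is exactly the paper's argument: specialize Theorem~\ref{T:th1} to $r=2$ and check that $\Res\bigl(X^2-1,(X-1)^2-1\bigr)=\pm 3$ is nonzero modulo $p$ for $p\geq 5$. (The paper records the resultant as $3$ rather than your $-3$; the sign depends on convention and is immaterial here.)
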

\begin{proof}
In this case $r=2$. We note that $\Res\left(X^2-1,(X-1)^2-1\right)=3$.
\end{proof}

\begin{corollary} Suppose $5$ is totally ramified in $F$ and $h_F$, $[F:\Q]$ are both odd.
  Then the unit equation~\eqref{E:unit} has no solutions.
\end{corollary}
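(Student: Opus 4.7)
The plan is to deduce this directly from Corollary~\ref{C:cor1} by taking $p=5$. I need to verify that the two hypotheses of Corollary~\ref{C:cor1} are satisfied under the assumptions that $5$ is totally ramified in $F$ and that both $h_F$ and $[F:\Q]$ are odd.

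For hypothesis (i), with $p=5$ we have $(p-1)/2 = 2$, so the condition becomes $\gcd([F:\Q],2)=1$, which is exactly the statement that $[F:\Q]$ is odd. For hypothesis (ii), with $p=5$ we have $p-1=4$, so the condition is $\gcd(2h_F,4)=2$. Since $h_F$ is odd, $2h_F$ is congruent to $2$ modulo $4$, and hence $\gcd(2h_F,4)=2$ as required.

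Both hypotheses of Corollary~\ref{C:cor1} thus hold with $p=5$, and the conclusion follows. There is no substantive obstacle here: the corollary is a transparent specialization of Corollary~\ref{C:cor1}, and the entire proof consists of the two elementary gcd checks above.
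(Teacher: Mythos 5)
Your proof is correct and matches the paper's intent exactly: the paper leaves this corollary without an explicit proof precisely because it is the immediate specialization of Corollary~\ref{C:cor1} to $p=5$, and your two gcd verifications ($\gcd([F:\Q],2)=1$ from $[F:\Q]$ odd, and $\gcd(2h_F,4)=2$ from $h_F$ odd) are exactly what is needed.
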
 

\begin{example} Let $F=\Q(\alpha)$ where
$\alpha^7-\alpha^6-45\alpha^5-65\alpha^4+85\alpha^3+129\alpha^2+9\alpha-11=0$.
This is a totally real field with discriminant $D_F=2^6 \cdot 7^4 \cdot 53^6$
where $p=53$ is totally ramified. We have $h_F=2$, so $r=4$. In this case, we
have $\Res(X^4 - 1,(X-1)^4 -1) = -375 = 3 \cdot 5^3$, hence the unit equation
has no solution by Theorem~\ref{T:th1}. 
\end{example}

\subsection*{The asymptotic FLT}

Let $F$ be a totally real number field. The \textbf{asymptotic Fermat's Last Theorem over $F$} is the statement that there exists a 
constant~$B_F$, depending only on~$F$, such that, for all primes~$\ell > B_F$, the only solutions to the equation
$x^\ell + y^\ell + z^\ell = 0$, with $x$, $y$, $z \in F$ are the trivial ones
satisfying $xyz =0$.  In \cite{FS1}, a criterion for asymptotic FLT over $F$
is established, which is expressed in terms of solutions
to a certain $S$-unit equation. In \cite{FKS}, the theory
of $2$-extensions is used to verify the $S$-unit criterion and therefore
asymptotic FLT for many fields $F$ where $2$ is totally ramified.
In the present paper we deal with case where $2$ is inert in $F$. 

We denote the class number of $F$ by $h_F$
and the narrow class number of~$F$ by~$h_F^+$.
Recall $h_F \mid h_F^+$ and the ratio $h_F^+/h_F$ is a power of $2$.
\begin{theorem} \label{T:th2} 
Let~$F$ be a totally real number field of odd degree. Suppose that $2$ is inert in~$F$ and 
that  $h_F^+$ is odd.
Assume further that the unit equation~\eqref{E:unit} has no solutions. 
Then the asymptotic FLT over~$F$ holds.
\end{theorem}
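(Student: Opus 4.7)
The plan is to verify the $S$-unit criterion of Freitas--Siksek \cite{FS1} for asymptotic FLT, applied with $S$ equal to the set of primes of $F$ above $2$. Because $2$ is inert, $S = \{\fp\}$ is a singleton, and $\fp = 2\calO_F$ is automatically principal. Consequently every element of $\calO_{F,S}^\times$ admits a unique factorisation $2^k u$ with $k \in \Z$ and $u \in \calO_F^\times$. The criterion of \cite{FS1} reduces asymptotic FLT over $F$ to controlling the solutions of the $S$-unit equation $\lambda + \mu = 1$ with $\lambda, \mu \in \calO_{F,S}^\times$.

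A short $\fp$-adic valuation analysis of $\lambda + \mu = 1$, together with the standard symmetry $(\lambda, \mu) \mapsto (-\lambda/\mu,\, 1/\mu)$, reduces every solution to one of two canonical shapes with $u, v \in \calO_F^\times$: either \textup{(A)} the genuine unit equation $u + v = 1$, or \textup{(B)} the equation $u + v = 2^c$ for some integer $c \geq 1$. Shape \textup{(A)} is directly excluded by hypothesis, so the bulk of the proof lies in disposing of shape \textup{(B)}.

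To handle shape \textup{(B)}, I would invoke the twin oddness assumptions. Because $[F:\Q] = n$ is odd and $2$ is inert, the residue field $\calO_F/\fp \cong \F_{2^n}$ has odd degree over $\F_2$, so $(\calO_F/\fp)^\times$ has odd order $2^n - 1$. Because $h_F^+$ is odd, one has $h_F^+ = h_F$; via the exact sequence
\[
\calO_F^\times \xrightarrow{\,\mathrm{sgn}\,} \{\pm 1\}^n \to \Cl^+(F) \to \Cl(F) \to 1
\]
this is equivalent to surjectivity of the sign map, granting full flexibility in prescribing the signature of a unit. Starting from $u + v = 2^c$, I would adjust $u, v$ by sign-units, exploit the oddness of $|(\calO_F/\fp)^\times|$ to take unique roots in the residue ring, and by lifting produce a new solution $u' + v' = 2^{c'}$ with $c' < c$. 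A finite descent drives $c$ to $0$, contradicting shape \textup{(A)}.

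\textbf{The main obstacle} is organising this descent cleanly, and in particular handling the base case $c = 1$ (where $u + v = 2$). One must combine the sign-map surjectivity with the oddness of the residue field and carefully control the image of $\calO_F^\times$ in $(\calO_F/\fp^m)^\times$ for small $m$ to produce the descent step. A secondary, clarifying question is the exact form of the $S$-unit criterion in \cite{FS1} relevant here: if it only requires a valuation bound of the form $c \leq 4\, v_\fp(2) = 4$, then the descent need only be iterated a bounded number of times, which would simplify matters substantially.
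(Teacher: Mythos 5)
Your framework is right---the proof does go through the $S$-unit criterion of \cite{FS1} with $S=\{\fq\}$, $\fq=2\calO_F$---but the descent you propose is aimed at an impossible target, and this is a genuine gap rather than a matter of ``organising things cleanly.'' You cannot drive $c$ to $0$ and eliminate all solutions of shape (B): the $S$-unit equation \eqref{eqn:sunit} \emph{always} has the solutions $(2,-1)$, $(-1,2)$ and $(\tfrac12,\tfrac12)$, i.e.\ $1+1=2$ is a shape-(B) solution with $c=1$ over every field. The criterion of \cite{FS1} (stated in the paper as Theorem~\ref{thm:FS}) does not ask for the nonexistence of $S$-unit solutions; it asks that every solution satisfy $\max\{|\ord_\fq\lambda|,|\ord_\fq\mu|\}\le 4$ \emph{and} the congruence $\ord_\fq(\lambda\mu)\equiv 1\pmod 3$. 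Your proposal is unaware of the congruence condition, and it matters: the paper's argument cannot rule out solutions with valuation exactly $4$, and these are admissible only because $4\equiv 1\pmod 3$.

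The key mechanism you are missing is how the hypothesis ``$h_F^+$ odd'' is actually used. It is not sign-map surjectivity; rather, after normalising so that $\lambda\in\calO_F$ and $\ord_\fq(\mu)=0$ with $n=\ord_\fq(\lambda)\ge 2$, one has $\mu\equiv 1\pmod{\fq^2}$, so $F(\sqrt{\mu})/F$ is unramified at all finite places and hence trivial because $h_F^+$ is odd; thus $\mu=\delta^2$ globally in $\calO_F^\times$. (Taking square roots in the residue ring, as you suggest, is automatic in characteristic $2$ and does not lift to a global square root; that lift is exactly what the narrow class number controls.) Writing $\lambda=1-\mu=(1+\delta)(1-\delta)$ and forming $\lambda'=(1+\delta)^2/4\delta$, $\mu'=-(1-\delta)^2/4\delta$ yields a new solution with $n'=2n-4$. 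This map is \emph{not} monotone decreasing: it kills $n=2,3$, fixes $n=4$, and \emph{increases} $n\ge 5$, where one concludes instead by Siegel's finiteness (infinitely many distinct solutions would be produced). The outcome, Lemma~\ref{lem:sunit}, is that $n_{\lambda,\mu}\in\{1,4\}$, which then meets both conditions of Theorem~\ref{thm:FS}. Finally, note that the odd-degree hypothesis is not used for the oddness of $|(\calO_F/\fq)^\times|$ (which plays no role) but to guarantee the Eichler--Shimura conjecture, an explicit hypothesis of \cite[Theorem 3]{FS1} that your outline omits entirely.
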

For a version of this theorem that also applies
to even degree fields, see Theorem~\ref{T:th2general}
below.

\begin{corollary} \label{C:cor3}
Let $F$ be totally real and $p \ge 5$ a prime totally ramified
in $F$. Suppose the following hold.
 \begin{enumerate}
  \item[(i)]  $\gcd\left(h_F \cdot [F:\Q],\frac{p-1}{2}\right)=1$.
 \item[(ii)]  $h_F^+$ and $[F:\Q]$ are both odd.
 \item[(iii)] $2$ is inert in $F$.
  \end{enumerate}
 Then the asymptotic Fermat's Last Theorem holds over $F$. 
\end{corollary}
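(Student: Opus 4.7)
The plan is to derive Corollary~\ref{C:cor3} as an essentially immediate combination of Theorem~\ref{T:th2} and Corollary~\ref{C:cor1}. Theorem~\ref{T:th2} requires four hypotheses on $F$: totally real, odd degree, $2$ inert, $h_F^+$ odd, and the unit equation~\eqref{E:unit} having no solutions. The first four are immediate from hypotheses (ii) and (iii) together with the assumption that $F$ is totally real, so the only substantive task is to verify the absence of solutions to~\eqref{E:unit}, which we intend to do by invoking Corollary~\ref{C:cor1}.

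To apply Corollary~\ref{C:cor1} we need a prime $p \geq 5$ totally ramified in $F$ (given), the condition $\gcd([F:\Q], (p-1)/2) = 1$, and the condition $\gcd(2h_F, p-1) = 2$. The first of these follows from (i) since $[F:\Q]$ divides $h_F\cdot [F:\Q]$. For the second, observe that $h_F \mid h_F^+$, and $h_F^+$ is odd by (ii); hence $h_F$ is odd. Writing $p-1 = 2^a m$ with $m$ odd (and $a\geq 1$ since $p$ is odd), the $2$-part of $2h_F$ is exactly $2$, so
\[
\gcd(2h_F, p-1) \;=\; 2\cdot \gcd(h_F, m) \;=\; 2\cdot \gcd\!\left(h_F, \tfrac{p-1}{2}\right),
\]
the last equality because $h_F$ is coprime to the remaining factor $2^{a-1}$ in $(p-1)/2$. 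Finally, (i) gives $\gcd(h_F, (p-1)/2) = 1$, since $h_F$ divides $h_F\cdot [F:\Q]$, and therefore $\gcd(2h_F, p-1) = 2$ as required.

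With all hypotheses of Corollary~\ref{C:cor1} verified, the unit equation~\eqref{E:unit} has no solutions over $F$, and Theorem~\ref{T:th2} then yields the asymptotic Fermat's Last Theorem over $F$. Since the argument is a short bookkeeping exercise assembling two results already established in the paper, there is no serious obstacle; the only point requiring care is the elementary gcd manipulation above, and one should check that nothing in the hypotheses of Theorem~\ref{T:th2} has been silently strengthened (in particular, that ``totally real of odd degree'' is indeed the standing assumption, which holds by~(ii) once $F$ is totally real).
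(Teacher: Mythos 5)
Your proof is correct and follows exactly the paper's route: the paper's own proof is the one-line statement that the result follows from Corollary~\ref{C:cor1} and Theorem~\ref{T:th2}, and your write-up simply supplies the (correct) gcd bookkeeping and the observation that $h_F$ odd follows from $h_F \mid h_F^+$.
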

\begin{proof} 
The result follows from Corollary~\ref{C:cor1} and Theorem~\ref{T:th2}.
\end{proof}
\begin{example} Let $F=\Q(\alpha)$ 
where $\alpha^5-110\alpha^3-605\alpha^2-990\alpha-451=0$. The field~$F$ is totally real and cyclic 
 over~$\Q$. We have $h_F=h_F^+=5$ and $p=5$ is totally ramified in~$F$.
Moreover,  $2$ is inert in $F$, hence the asymptotic FLT holds over $F$. 
\end{example}

\section{Proof of Theorem~\ref{T:th1}}
 
We will write~$\Q(\zeta_p)$ be  the $p$-th cyclotomic field and $\Q(\zeta_p)^+$
its maximal totally real subfield.  We will make use of the \emph{ambiguous
class number formula} of Chevalley (see~\cite[Chapter~13, Lemma~4.1]{Lang}):
\begin{thm}[Chevalley] 
Let $K/F$ be a cyclic extension of number fields. Then
\begin{equation}
\label{E:chevalley} 
\# \Cl(K)^G \; = \; 
\frac{h_F \cdot e(K/F)}{[K:F] \cdot [\calO_F^\times : \calO_F^\times \cap \Norm(K^\times)]} \; ,
\qquad\qquad e(K/F) = \prod_{\upsilon} e_\upsilon
\end{equation}
where $e_\upsilon$ is the ramification degree at $\upsilon$
and the product is taken over all places $\upsilon$ of $K$. 
\end{thm}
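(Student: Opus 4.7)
The strategy is Galois cohomology for $G = \Gal(K/F)$, applied to the two standard short exact sequences of $G$-modules
\begin{equation*}
1 \to \calO_K^\times \to K^\times \to P_K \to 1, \qquad 1 \to P_K \to I_K \to \Cl(K) \to 1,
\end{equation*}
where $P_K$ and $I_K$ denote the groups of principal and fractional ideals of~$K$. Two vanishings drive the computation: Hilbert 90 yields $H^1(G, K^\times) = 0$, and $H^1(G, I_K) = 0$ follows from Shapiro's lemma, since the $v$-part of $I_K$ equals $\Ind_{G_w}^G \Z$ for any place $w$ of $K$ above $v$, and $H^1(G_w, \Z) = \Hom(G_w, \Z) = 0$. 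These collapse the long exact cohomology sequences into
\begin{equation*}
0 \to P_K^G \to I_K^G \to \Cl(K)^G \to H^1(G, P_K) \to 0
\end{equation*}
and $0 \to \calO_F^\times \to F^\times \to P_K^G \to H^1(G, \calO_K^\times) \to 0$.

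Next I would compute $[I_K^G : I_F \calO_K]$ place by place: for each finite~$v$ of $F$, the $G$-invariants of $\bigoplus_{w \mid v} \Z$ are spanned by $\sum_{w \mid v} w$, while $\fp_v \calO_K = e_v \sum_{w \mid v} w$, giving a local contribution of $e_v$ and a total index $\prod_{v \text{ finite}} e_v$. A snake-lemma chase embedding $1 \to P_F \to I_F \to \Cl(F) \to 1$ into the first four-term sequence above (the rightmost map lands in the image $M$ of $I_K^G$ in $\Cl(K)^G$, and $\Cl(K)^G / M \cong H^1(G, P_K)$ by exactness), combined with the second four-term sequence, yields the intermediate identity
\begin{equation*}
\#\Cl(K)^G \; = \; \frac{h_F \cdot [I_K^G : I_F \calO_K] \cdot \#H^1(G, P_K)}{\#H^1(G, \calO_K^\times)}.
\end{equation*}

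To handle the cohomological ratio, I would use the Tate long exact sequence from $1 \to \calO_K^\times \to K^\times \to P_K \to 1$ (valid since $G$ is cyclic). Hilbert 90 gives $0 \to H^1(G, P_K) \to H^2(G, \calO_K^\times) \to H^2(G, K^\times)$, and identifying $H^2$ via the Tate formula $M^G / \Norm(M)$ shows $\#H^1(G, P_K) = \#H^2(G, \calO_K^\times) / [\calO_F^\times : \calO_F^\times \cap \Norm(K^\times)]$; hence the ratio equals $Q(\calO_K^\times) / [\calO_F^\times : \calO_F^\times \cap \Norm(K^\times)]$, where $Q$ is the Herbrand quotient. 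Herbrand's classical computation gives $Q(\calO_K^\times) = \prod_{v \mid \infty_F} e_v / [K:F]$: by Dirichlet's unit theorem, $\calO_K^\times$ is a $\Z[G]$-lattice that becomes $\Q[G]$-isomorphic to the trace-zero sublattice of $\bigoplus_{w \mid \infty} \Z$, and multiplicativity of $Q$, Shapiro at each archimedean place, and $Q_H(\Z) = |H|$ for cyclic~$H$ finish the job (with $e_v \in \{1,2\}$ at archimedean~$v$ recording the order of complex conjugation in the decomposition group). Combining yields the stated formula, with $e(K/F) = \prod_v e_v$ ranging over all places of~$F$.

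The hardest step is the Herbrand quotient calculation for $\calO_K^\times$, which requires Dirichlet's theorem and careful bookkeeping of archimedean ramification through Shapiro's lemma. A secondary subtlety is that every module in sight ($K^\times$, $I_K$, $P_K$, $\calO_K^\times$) is infinite, so each cancellation above must be done at the level of indices of subgroups rather than orders of groups.
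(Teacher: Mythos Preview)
The paper does not prove Chevalley's formula; it is quoted as a known result from Lang, \emph{Cyclotomic Fields I and II}, Chapter~13, Lemma~4.1. Your sketch is the standard cohomological argument and is correct: the two short exact sequences, Hilbert~90 and the induced structure of $I_K$ to kill the relevant $H^1$'s, the local computation of $[I_K^G:I_F\calO_K]$, and the Herbrand quotient of $\calO_K^\times$ via Dirichlet's unit theorem together with Shapiro at the archimedean places. This is essentially the proof one finds in Lang, so there is nothing to compare against within the paper itself.

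One small point worth flagging: you take $e(K/F)=\prod_v e_v$ over places~$v$ of~$F$, which is the standard formulation and is what your argument actually establishes. The paper's statement has the product running over places of~$K$; in a Galois extension these two products differ in general, but in the paper's only application there is a single ramified prime of~$F$ with a single prime of~$K$ above it, so the distinction is immaterial there.
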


We will also need the following auxiliary lemma.

\begin{lemma} \label{L:units2}
Let $K/F$ be a Galois extension of degree~$(p-1)/2$. 
Let~$p$ be a rational prime, totally ramified in~$K$, and let $\fp$ be the unique prime of $F$ above $p$.
Then all $\varepsilon \in \calO_F^\times \cap \Norm(K^\times)$ satisfy $\varepsilon \equiv \pm 1 \pmod{\fp}$.
\end{lemma}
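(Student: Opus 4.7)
The plan is to reduce the statement to a local computation at the unique prime above $p$ in $K$, exploiting the fact that total ramification makes the local norm collapse to a power map on residues.

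First I would unpack the ramification data. Since $p$ is totally ramified in $K/\Q$, multiplicativity of ramification indices forces $p$ to be totally ramified in $F/\Q$ (consistent with $\fp$ being the unique prime above $p$) and $\fp$ to be totally ramified in $K/F$ with a unique prime $\fP$ above it. Both residue fields $\calO_F/\fp$ and $\calO_K/\fP$ are then equal to $\F_p$. Writing $\varepsilon = \Norm_{K/F}(\alpha)$ with $\alpha \in K^\times$, the formula $v_\fp(\Norm_{K/F}(\alpha)) = f(\fP/\fp)\, v_\fP(\alpha)$ together with $f(\fP/\fp)=1$ and the fact that $\varepsilon$ is a unit at $\fp$ shows $\alpha \in \calO_{K,\fP}^\times$.

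The core observation is that total ramification at $\fP$ forces the decomposition group (which here equals the whole of $\Gal(K/F)$, since $\fP$ is the unique prime above $\fp$) to coincide with the inertia group, so $\Gal(K/F)$ acts trivially on the residue field $\F_p$. Expanding $\varepsilon = \prod_{\sigma \in \Gal(K/F)} \sigma(\alpha)$ modulo $\fP$ therefore yields $\varepsilon \equiv \alpha^{(p-1)/2} \pmod{\fP}$. Applying Fermat's little theorem to the class of $\alpha$ in $\F_p^\times$ gives $\alpha^{p-1}\equiv 1$, hence $\varepsilon^2 \equiv 1 \pmod{\fP}$; since $\varepsilon \in \calO_F$ and $\fp = \fP \cap \calO_F$, this forces $\varepsilon \equiv \pm 1 \pmod{\fp}$.

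There is no real obstacle: the argument is a short chain of standard facts, and the one delicate point --- that $\Gal(K/F)$ acts trivially on the residue field at $\fP$ --- is an immediate consequence of the total ramification hypothesis, since in that case the inertia group already fills all of $\Gal(K/F)$.
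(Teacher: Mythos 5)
Your proof is correct and follows essentially the same route as the paper's: reduce to the residue field at the unique prime $\fP$ above $\fp$, note that total ramification makes the inertia group all of $\Gal(K/F)$ so every conjugate of $\alpha$ is congruent to $\alpha$ modulo $\fP$, and hence the norm reduces to $\alpha^{(p-1)/2}\equiv\pm 1$ in $\F_p^\times$. The only difference is cosmetic (you justify $\ord_\fP(\alpha)=0$ via the valuation formula for the norm, while the paper argues it directly from uniqueness of $\fP$).
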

\begin{proof}
Write $\fP$ for the unique prime of $K$ above $\fp$.
Suppose $\varepsilon \in \calO_F^\times \cap \Norm(K^\times)$.
Then $\varepsilon=\Norm(\eta)$ where $\eta \in K^\times$. Moreover,
since $\ord_\fp(\varepsilon)=0$ and there is only one prime
$\fP$ above $\fp$, we have $\ord_\fP(\eta)=0$. Now
as $\fP/\fp$ is totally ramified, the inertia
group of $\fP/\fp$ is equal to $G=\Gal(K/F)$. Thus for any $\sigma \in G$
\[
\eta^\sigma \equiv \eta \pmod{\fP}.
\]
Hence
\[
\varepsilon=\prod_{\sigma \in G} \eta^{\sigma} \equiv \eta^{(p-1)/2} \equiv \pm 1 \pmod{\fP}
\]
as the residue field of $\fP$ is $\F_p$. Hence $\varepsilon \equiv \pm 1 \pmod{\fp}$.
\end{proof}

\begin{proof}[Proof of Theorem~\ref{T:th1}] 
Let $K=F \cdot \Q(\zeta_p)^+$.
The extension $K/F$ is Galois and cyclic. We write $G=\Gal(K/F)$ for its Galois group. 
We will show that
\begin{equation}
\label{E:ramification}
[K:F]=e(K/F)=\prod_{\upsilon} e_\upsilon=\frac{p-1}{2}.
\end{equation}

Note that $[K:\Q]$ is divisible by both $[F:\Q]$ and $[\Q(\zeta_p)^+ : \Q]=(p-1)/2$ and is a divisor of $[F:\Q] \cdot [\Q(\zeta_p)^+ : \Q]$.
By assumption (i) we see that $[K:\Q]=[F:\Q] \cdot (p-1)/2$ and so $[K:F]=(p-1)/2$. 

Since~$p$ is totally ramified in~$F$ we have $p\calO_F=\fp^{[F:\Q]}$.
Choose a prime $\fP$ of $K$ above $\fp$. As $p$ is totally ramified in $\Q(\zeta_p)^+/\Q$, and in $F/\Q$
we see that the ramification index $e(\fP/p)$ is divisible by both $[F:\Q]$ and
$[\Q(\zeta_p)^+ : \Q]=(p-1)/2$ and is a divisor of $[F:\Q] \cdot [\Q(\zeta_p)^+
: \Q]$. Thus $e(\fP/p)=[K:\Q]$ and $e(\fP/\fp)=(p-1)/2$. Since $p$ the only
prime ramified in $\Q(\zeta_p)^+/\Q$, we see that $e_\upsilon=1$ for all places
$\upsilon$ of $K$ except $\upsilon=\fP$ for which it is $(p-1)/2$.
Thus $e(K/F)=(p-1)/2=[K:F]$.


We conclude from~\eqref{E:chevalley} that $ [\calO_F^\times:\calO_F^\times \cap \Norm(K^\times)]$ divides $h_F$. 
Therefore, for all $\lambda\in \calO_F^\times$, we have
$\lambda^{h_F}\in \calO_F^\times \cap \Norm(K^\times)$, and so by 
Lemma~\ref{L:units2} applied to $\varepsilon=\lambda^{h_F}$
we have
$\lambda^{2 h_F}  \equiv  1 \pmod \fp$.
However, as the residue field of $\fp$ is $\F_p$, we also have $\lambda^{p-1} \equiv 1 \pmod{\fp}$.
By definition, $r=\gcd(2 h_F, p-1)$, and thus $\lambda^r \equiv 1 \pmod{\fp}$
for all $\lambda \in \calO_F^\times$.

Let $\lambda$, $\mu \in \calO_F^\times$ be a solution to the  unit equation~\eqref{E:unit}. 
Thus $\lambda^r \equiv (1-\lambda)^r \equiv 1 \pmod{\fp}$ and as $r$ is even (since $p \ge 3$)
we can write $\lambda^r \equiv (\lambda-1)^r \equiv 1 \pmod{\fp}$. Hence
the polynomials $X^r-1$ and $(X-1)^r-1$ have a common root in $\F_{\fp}=\F_p$,
contradicting assumption (ii).
\end{proof}

\noindent \textbf{Remark.}
Theorem~\ref{T:th1} is false without hypothesis (i). Consider the field $F=\Q(\zeta_{11})^+$ wherein the prime~$p=11$ is totally ramified 
and we have $h_F=1$, so hypothesis (ii) holds. However 
$\gcd([F:\Q],(p-1)/2) = 5$ so hypothesis (i) fails. We note the following solution to the unit equation,
$\lambda=2+\zeta_{11}+\zeta_{11}^{-1}$,
$\mu=-1-\zeta_{11}-\zeta_{11}^{-1}$. Indeed, according to
the computer algebra system \texttt{Magma} \cite{MAGMA}, 
the unit equation has $570$ solutions.

In fact, for any prime $p \ge 5$, the unit equation \eqref{E:unit}
with $F=\Q(\zeta_p)$
has the solution 
$\lambda=2+\zeta_p+\zeta_p^{-1}$, $\mu=-1-\zeta_p-\zeta_p^{-1}$.
Clearly $\lambda$, $\mu \in F$, so to see that $\lambda$, $\mu \in \calO_F^\times$, 
it is enough to show
that they are units in $\Z[\zeta_p]$. Recall that the unique
prime ideal above $p$ in $\Z[\zeta_p]$ is generated by $1-\zeta_p^j$
for any $j \not\equiv 0 \pmod{p}$. Hence
\[
\mu=-\zeta_p^{-1} (1+\zeta_p+\zeta_p^2)=-\zeta_p^{-1} \cdot \frac{(1-\zeta_p^3)}{(1-\zeta_p)}
\]
is a unit.
Further, 
\[
\lambda=(1+\zeta_p)(1+\zeta_p^{-1})=\frac{(1-\zeta_p^2)(1-\zeta_p^{-2})}{(1-\zeta_p)(1-\zeta_p^{-1})}
\] 
is also a unit. The existence of the solution $(\lambda,\mu)$ to \eqref{E:unit}
makes it difficult to establish the asymptotic Fermat's Last Theorem for
the fields $\Q(\zeta_p)^+$ with $p \ge 5$. 
 
\section{Proof of Theorem~\ref{T:th2}}
The following is a special case of \cite[Theorem 3]{FS1}.
\begin{theorem}\label{thm:FS}
Let $F$ be a totally real number field. Suppose the Eichler--Shimura conjecture over $F$ holds.
Assume that $2$ is inert in $F$ and write $\fq=2\calO_F$ for the prime ideal above $2$. Let $S=\{\fq\}$
and write $\calO_S^\times$ for the group of $S$-units in $F$. Suppose every solution $(\lambda,\mu)$
to the $S$-unit equation 
\begin{equation}\label{eqn:sunit}
\lambda+\mu=1, \qquad \lambda,~\mu \in \calO_S^\times,
\end{equation}
satisfies both of the following conditions
\[
\max\{ \lvert \ord_\fq(\lambda) \rvert,~ \lvert \ord_\fq(\mu) \rvert\} \le 4, \qquad
\ord_{\fq}(\lambda \mu)  \equiv 1 \pmod{3}.
\]
Then the asymptotic Fermat's Last Theorem holds over $F$.
\end{theorem}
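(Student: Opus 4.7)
The plan is to deduce this directly from the more general criterion \cite[Theorem~3]{FS1} by specializing its hypotheses to the case where $2$ is inert in $F$, so that $S = \{\fq\}$ is a single prime.

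First I would recall the structure of \cite[Theorem~3]{FS1}. In its general form, to each prime $\fq$ of $F$ above $2$ it associates a finite list of admissible local data --- essentially the reduction types, conductor exponents, and mod~$\ell$ Frobenius traces that can arise at $\fq$ --- which every solution of the $S$-unit equation \eqref{eqn:sunit} coming from a putative Fermat counterexample must match. The Eichler--Shimura assumption gives modularity of the Frey--Hellegouarch curve $E/F$ attached to such a counterexample; level-lowering for Hilbert modular forms then produces an elliptic curve $E'/F$ with conductor supported on $S$; and the $S$-unit datum $(\lambda,\mu)$ is extracted from a suitable parameterization of such $E'$.

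Second, I would specialize to the case at hand. Because $2$ is inert in $F$, $S=\{\fq\}$ is a singleton, so the vector of local conditions collapses to two scalar conditions at $\fq$. After using the natural symmetries
\[
(\lambda,\mu) \;\longleftrightarrow\; (\mu,\lambda), \qquad
(\lambda,\mu) \;\longleftrightarrow\; (1/\lambda,\,-\mu/\lambda)
\]
of \eqref{eqn:sunit}, the bound on $\max\{\lvert\ord_\fq(\lambda)\rvert,\lvert\ord_\fq(\mu)\rvert\}$ arises from bounding the $\fq$-adic conductor exponent of $E'$ at the unramified prime $\fq$, while the congruence on $\ord_\fq(\lambda\mu)$ modulo $3$ rules out the remaining admissible conductor configurations at~$\fq$. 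I would then check that in the inert setting the tables of \cite{FS1} collapse to the precise constants $4$ and $1\pmod{3}$ asserted here.

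The main obstacle is bookkeeping rather than conceptual: verifying that specialization of \cite[Theorem~3]{FS1} with $S=\{\fq\}$ produces exactly the two conditions above with exactly the stated constants. This is a concrete unwinding of the local analysis of elliptic curves over $F_\fq$ (the unramified extension of $\Q_2$ of degree $[F:\Q]$) carried out in \cite{FS1}, combined with careful use of the symmetries of \eqref{eqn:sunit} to absorb several cases into one clean statement. The step is routine in spirit, but any off-by-one in the bound~$4$ or in the modulus~$3$ would invalidate the downstream application to Theorem~\ref{T:th2}, so the match must be checked prime-by-prime in the relevant table.
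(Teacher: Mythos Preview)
Your approach is exactly what the paper does: it simply states that this theorem is a special case of \cite[Theorem~3]{FS1} and offers no further proof. Your additional discussion of how the specialization would be carried out is more than the paper provides, but the underlying idea---cite \cite[Theorem~3]{FS1} and note that when $2$ is inert the set $S$ reduces to a single prime---is identical.
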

For a discussion of the Eichler--Shimura conjecture see 
\cite[Section 2.4]{FS1}, but for the purpose of this paper
we note that the conjecture is known to hold for all totally real fields of odd degree.

\begin{lemma}\label{lem:sunit}
Let $F$ be a totally real number field.
Suppose that the narrow class number $h_F^+$ of $F$ is odd, and that the unit equation \eqref{E:unit}
has no solutions. 
Assume that $2$ is inert in $F$ and write $\fq=2\calO_F$ for the prime ideal above $2$. Let $S=\{\fq\}$.
For a solution $(\lambda,\mu)$ to the $S$-unit equation \eqref{eqn:sunit}, write 
\[
n_{\lambda,\mu}=\max\{ \lvert \ord_\fq(\lambda) \rvert,~ \lvert \ord_\fq(\mu) \rvert\}.
\]
Then $n_{\lambda,\mu}=1$ or $4$.
\end{lemma}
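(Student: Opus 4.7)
The plan is to place the solution in a canonical form via the $S_3$-action on the set of $S$-unit solutions, and then run a descent based on narrow class field theory. Using the symmetry $(\lambda,\mu) \mapsto (1/\mu,-\lambda/\mu)$ (and the swap $(\lambda,\mu) \mapsto (\mu,\lambda)$), both of which preserve $n_{\lambda,\mu}$, one reduces to the case $\lambda = u \in \calO_F^\times$ and $\mu = 2^b v$ with $v \in \calO_F^\times$ and $b \geq 0$; then $n_{\lambda,\mu} = b$. The case $b = 0$ is immediately excluded by the no-unit-equation hypothesis, so $b \geq 1$.

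The key input for $b \geq 2$ is that $u = 1 - 2^b v \equiv 1 \pmod{4}$. The element $\alpha = (1+\sqrt u)/2$ satisfies the monic integral polynomial $X^2 - X - (u-1)/4 \in \calO_F[X]$ with discriminant the unit $u$. Hence $\calO_F[\alpha]$ is étale over $\calO_F$ and coincides with $\calO_{F(\sqrt u)}$, so the quadratic extension $F(\sqrt u)/F$ is unramified at every finite place and thus lies inside the narrow Hilbert class field $H_F^+$; since $h_F^+$ is odd, this forces $[F(\sqrt u):F]=1$, i.e., $u = u_1^2$ for some $u_1 \in \calO_F^\times$. Factoring
\[
1 - u \;=\; (1-u_1)(1+u_1) \;=\; 2^b v,
\]
both factors are supported only at $\fq$. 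Setting $\alpha_1 = \ord_\fq(1-u_1)$ and $\beta_1 = \ord_\fq(1+u_1)$, one has $\alpha_1 + \beta_1 = b$ with both $\geq 1$; the identity $(1-u_1) + (1+u_1) = 2$ together with $\ord_\fq(2) = 1$ pins $\min(\alpha_1,\beta_1) = 1$, with $\alpha_1 = \beta_1 = 1$ only when $b = 2$.

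For $b = 2$ this directly yields units $w_0 = (1-u_1)/2$ and $w_1 = (1+u_1)/2$ satisfying $w_0 + w_1 = 1$, a solution to the unit equation, contradicting the hypothesis. For $b \geq 3$, choosing the sign of $u_1$ so that $\alpha_1 = 1$, one writes $1 - u_1 = 2 w_0$ and $1 + u_1 = 2^{b-1} w_1$ with $w_0, w_1 \in \calO_F^\times$, and summing yields the new $S$-unit solution $(w_0, 2^{b-2} w_1)$ with $n' = b - 2 < n$. The plan is then to iterate this descent, feeding the new solution back in. The main obstacle is pinning the bound down to the specific set $\{1,4\}$: the plain descent excludes every even $b \geq 2$ (via a chain back to $b=2$ and the unit-equation contradiction), but the odd values $b \geq 3$ collapse under naive descent to $n' = 1$ without an immediate contradiction. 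Closing this gap requires combining the descent with complementary identities such as the squaring relation $(u+2^b v)^2 = 1 \Rightarrow u^2 + 2^{b+1} v(1 - 2^{b-1} v) = 1$ (which produces alternative $S$-unit solutions whose own descents must also be controlled), and tracking the Artin--Schreier invariant $\Tr_{\F_{2^d}/\F_2}(\bar v)$ through each transformation, which is what determines when the narrow class field descent further collapses to a unit equation.
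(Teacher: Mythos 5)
Your reduction to the canonical form, the observation that $u\equiv 1\pmod{\fq^2}$ forces $u$ to be a square in $\calO_F^\times$ via the odd narrow class number, and the factorisation $1-u=(1-u_1)(1+u_1)$ with $\min(\alpha_1,\beta_1)=1$ all match the paper's proof and are correct. Your descent, however, only uses the \emph{additive} relation $(1-u_1)+(1+u_1)=2$, which after dividing by $2$ gives a new solution with $n'=b-2$. As you yourself note, this kills all even $b\ge 2$ but terminates harmlessly at $b=1$ when $b$ is odd, so $b=3,5,7,\dots$ are not excluded — and the lemma requires them to be. The repairs you sketch do not close this gap: the squaring relation $u^2+2^{b+1}v(1-2^{b-1}v)=1$ does not in general yield a new \emph{$S$-unit} solution, because $1-2^{b-1}v=(1+u)/2$ need not generate a power of $\fq$ (unlike $1\pm u_1$, whose product is $2^bv$, forcing each to be supported at $\fq$); and the Artin--Schreier bookkeeping is not developed into an argument. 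So the proposal has a genuine gap at precisely the odd values $n\ge 3$.

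The idea you are missing is to combine the two linear relations \emph{multiplicatively} rather than additively. In the paper's notation ($\mu=\delta^2$, $\lambda_1=1+\delta$, $\lambda_2=1-\delta$, with $\ord_\fq(\lambda_1)=n-1$, $\ord_\fq(\lambda_2)=1$ after a sign choice), one multiplies $\lambda_1+\lambda_2=2$ by $\lambda_1-\lambda_2=2\delta$ and divides by $4\delta$ to get
\[
\frac{\lambda_1^2}{4\delta}+\frac{-\lambda_2^2}{4\delta}=1 ,
\]
a new $S$-unit solution with $n'=2(n-1)-2=2n-4$ (the $\lambda_i$ are genuine $S$-units since $\lambda_1\lambda_2=\lambda$). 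For $n=2$ and $n=3$ this gives $n'=0$ and $n'=2$, both contradictions, exactly as in your argument; but for $n\ge 5$ one has $n'=2n-4>n$, so iterating produces infinitely many distinct solutions to the $S$-unit equation, contradicting Siegel's finiteness theorem. This is what eliminates every $n\ge 5$, odd or even, leaving only $n=1$ and the fixed point $n=4$ of the map $n\mapsto 2n-4$. (Your additive descent is not wrong — it would in fact show $n\ne 4$ as well, which is consistent with, though stronger than, the stated conclusion — but on its own it cannot reach the odd case.)
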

\begin{proof}
Let $(\lambda,\mu)$ be a solution to \eqref{eqn:sunit}.
If $n_{\lambda,\mu}=0$
then both $\lambda$, $\mu$ are units, and this contradicts our
assumption that the unit equation has no solutions.
Therefore $n_{\lambda,\mu} \ge 1$.

We claim that we can assume
\begin{equation}\label{E:(3.3)}
\lambda\in \calO_F \quad \text{and}\quad \ord_{\fq}(\mu)=0.
\end{equation}
Indeed, if $\ord_{\fq}(\lambda)>0$ then $\ord_{\fP}(\mu)=0$ already; if
$\ord_{\fq}(\lambda)=0$ then  $\ord_{\fq}(\mu)\geq 0$ and we simply swap
$\lambda$ and $\mu$. Suppose $\ord_{\fq}(\lambda)<0$. Then
$\ord_{\fq}(\lambda)=\ord_{\fq}(\mu)$. Moreover, we have the equality
\[
\frac{1}{\lambda}-\frac{\mu}{\lambda}=1
\]
and replacing $(\lambda,\mu)$ by $\left(\frac{1}{\lambda},-\frac{\mu}{\lambda}\right)$
gives a solution to the $S$-unit equation \eqref{eqn:sunit} satisfying~\eqref{E:(3.3)}, and the value
$n_{\lambda,\mu}$ is unchanged.

Observe that $\mu \in \calO_F^\times$. 
If $\mu \equiv 1 \pmod{\fq^2}$ then the extension $F(\sqrt{\mu})/F$
is unramified at all finite places of $F$. As $h_F^+$ is odd this
extension must be trivial, implying that $\mu$ is a square in $\calO_F^\times$.
Write $n=n_{\lambda,\mu}=\ord_{\fq}(\lambda)$ and suppose $n \ge 2$.
Thus $\mu=\delta^2$ with $\delta \in \calO_F^\times$.
Write $\lambda_1=1+\delta$ and $\lambda_2=1-\delta$.
Then $\lambda_1 \lambda_2=1-\mu=\lambda$ thus $\lambda_1$, $\lambda_2$ belong to
$\calO_S^\times$, and satisfy $\ord_\fq(\lambda_i) \ge 0$. Moreover,
\begin{equation}\label{eqn:lambdai}
\lambda_1+\lambda_2=2, \qquad \lambda_1-\lambda_2=2\delta.
\end{equation}
Since $\ord_\fq(\lambda_1)+\ord_\fq(\lambda_2)=\ord_\fq(\lambda)=n$
one of $\ord_\fq(\lambda_1)$, $\ord_\fq(\lambda_2)$ is $n-1$ and the other is $1$.
By swapping $\delta$ and $-\delta$ if necessary we may suppose
$\ord_\fq(\lambda_1)=n-1$ and $\ord_\fq(\lambda_2)=1$.
Multiplying the two equations in \eqref{eqn:lambdai} and dividing by $4 \delta$ we obtain
\[
\lambda^\prime+\mu^\prime=1, \qquad
\lambda^\prime=\frac{\lambda_1^2}{4\delta}, \quad \mu^\prime=\frac{-\lambda_2^2}{4 \delta}.
\]
Observe that we have another solution $(\lambda^\prime,\mu^\prime)$ to the $S$-unit equation
with $n_{\lambda^\prime,\mu^\prime}=2n-4$ and $\mu^\prime \in \calO_F^\times$.

If $n_{\lambda,\mu}=n=2$ then $n_{\lambda^\prime,\mu^\prime}=0$ so $(\lambda^\prime,\mu^\prime)$
is a solution to the unit equation \eqref{E:unit}, contradicting the hypothesis
that the unit equation has no solutions.
If $n_{\lambda,\mu}=n=3$ then $n_{\lambda^\prime,\mu^\prime}=2$ which
we have just established is impossible. Next suppose $n \ge 5$.
Then $n_{\lambda^\prime,\mu^\prime} =2n-4>n_{\lambda,\mu}$.
Repeating this argument yields infinitely many distinct solutions
to the $S$-unit equation, giving a contradiction. We conclude that
$n_{\lambda,\mu}=1$ or $4$ as required.
\end{proof}

Our Theorem~\ref{T:th2} follows from the following more general theorem, which
we now prove.
\begin{theorem} \label{T:th2general}
Let~$F$ be a totally real field satisfying the following conditions.
\begin{enumerate}
\item[(i)] The Eichler--Shimura conjecture over $F$ holds.
\item[(ii)] $h_F^+$ is odd.
\item[(iii)] $2$ is inert in $F$.
\item[(iv)] The unit equation has no solutions in $F$. 
\end{enumerate}
Then the asymptotic Fermat's Last Theorem over $F$ holds.
\end{theorem}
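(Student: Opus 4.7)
The plan is to reduce immediately to Theorem~\ref{thm:FS}, whose hypotheses amount to two assertions about every solution $(\lambda,\mu)$ of the $S$-unit equation~\eqref{eqn:sunit} with $S=\{\fq\}$: the bound $n_{\lambda,\mu}\le 4$, and the congruence $\ord_\fq(\lambda\mu)\equiv 1\pmod 3$. Assumption~(i) supplies the Eichler--Shimura hypothesis required by Theorem~\ref{thm:FS}, while assumptions~(ii), (iii), (iv) are exactly what is needed to invoke Lemma~\ref{lem:sunit}. That lemma tells us that every $S$-unit solution satisfies $n_{\lambda,\mu}\in\{1,4\}$, which already takes care of the first condition.

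It remains to verify the congruence $\ord_\fq(\lambda\mu)\equiv 1\pmod 3$ for every solution, which is a short case analysis on the signs of $\ord_\fq(\lambda)$ and $\ord_\fq(\mu)$. Since $\lambda+\mu=1$, at most one of these valuations is strictly negative, and if one is negative then both are equal and negative. Concretely, if exactly one of $\lambda,\mu$ lies in $\calO_F\setminus\calO_F^\times$ (say $\ord_\fq(\lambda)=n_{\lambda,\mu}\ge 1$, $\ord_\fq(\mu)=0$), then $\ord_\fq(\lambda\mu)=n_{\lambda,\mu}\in\{1,4\}$, both of which are $\equiv 1\pmod 3$. If instead $\ord_\fq(\lambda)<0$, then the equality $\mu=1-\lambda$ forces $\ord_\fq(\mu)=\ord_\fq(\lambda)=-n_{\lambda,\mu}$, so $\ord_\fq(\lambda\mu)=-2n_{\lambda,\mu}\in\{-2,-8\}$, and again both values are $\equiv 1\pmod 3$. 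The remaining possibility that both $\lambda,\mu$ are units (i.e.\ $n_{\lambda,\mu}=0$) is excluded by hypothesis~(iv), consistent with Lemma~\ref{lem:sunit}.

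With both bulleted conditions of Theorem~\ref{thm:FS} verified for every $S$-unit solution, Theorem~\ref{thm:FS} delivers the asymptotic Fermat's Last Theorem over $F$, proving Theorem~\ref{T:th2general}. Theorem~\ref{T:th2} is then the special case in which the odd degree of $F$ is invoked to supply Eichler--Shimura automatically, via the discussion recalled just after Theorem~\ref{thm:FS}.

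There is essentially no genuine obstacle at this stage: all the real work was front-loaded into Lemma~\ref{lem:sunit}, whose descent argument pinned $n_{\lambda,\mu}$ down to $\{1,4\}$. The only thing to be careful about is that the congruence condition in Theorem~\ref{thm:FS} is imposed on \emph{all} solutions, not merely on those normalised so that $\lambda\in\calO_F$ and $\mu\in\calO_F^\times$; this is why the case $\ord_\fq(\lambda)<0$ must be handled separately, and the pleasant arithmetic coincidence that both $-2$ and $-8$ are $\equiv 1\pmod 3$ is what makes the argument close.
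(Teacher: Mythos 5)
Your proposal is correct and follows essentially the same route as the paper: invoke Lemma~\ref{lem:sunit} to pin $n_{\lambda,\mu}$ to $\{1,4\}$, enumerate the possible valuation pairs $(\ord_\fq(\lambda),\ord_\fq(\mu))$ forced by $\lambda+\mu=1$, check that $\ord_\fq(\lambda\mu)\in\{-8,-2,1,4\}$ is always $\equiv 1\pmod 3$, and conclude via Theorem~\ref{thm:FS}. The paper's proof is just a more compressed version of the same case analysis.
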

\begin{proof} 
Let $(\lambda,\mu)$ be a solution to the $S$-unit equation
\eqref{eqn:sunit}. From Lemma~\ref{lem:sunit} we have
$n_{\lambda,\mu}=1$ or $4$. As $\lambda+\mu=1$,
we conclude that 
\[
(\ord_\fq(\lambda),\ord_\fq(\mu)) \; \in \;
\left\{(-4,-4),~(-1,-1),~(1,0),~(0,1),~(4,0),~(0,4)\right\}.  
\]
Thus $\ord_\fq(\lambda\mu) \equiv 1 \pmod{3}$.
Theorem~\ref{thm:FS} now completes the proof.
\end{proof}

\bigskip\bigskip

\end{document}